\newtheorem{theorem}{Theorem}[section]
\tikzset{->-/.style={decoration={
      markings,
      mark=at position #1 with {\arrow{>}}}, postaction={decorate}}}
\tikzset{->>-/.style={decoration={
      markings,
      mark=at position #1 with {\arrow{>>}}}, postaction={decorate}}}
\numberwithin{equation}{section}
\newcommand{\I}{\mathrm{i}}
\DeclareMathOperator{\sh}{sh}
\DeclareMathOperator{\ch}{ch}
\DeclareMathOperator{\End}{End}
\DeclareMathOperator{\Mod}{Mod}
\DeclareMathOperator{\Sk}{Sk}
\DeclareMathOperator{\Rel}{Rel}
\DeclareMathDelimiter{\Norm}{\mathord}{largesymbols}{"3E}{largesymbols}{"3E}
\begin{document}

\title{A Note on
  Double Affine Hecke Algebra for
  Skein Algebra
  on Twice-Punctured Torus}

\author[K. Hikami]{Kazuhiro Hikami}
\address{Faculty of Mathematics,
  Kyushu University,
  Fukuoka 819-0395, Japan.}

\email{
  \texttt{khikami@gmail.com}
}
\date{\today}

\begin{abstract}
  We construct a
  generalization of the $C^\vee C_1$-type
  double
  affine Hecke algebra for the skein algebra
  on the twice-punctured torus~$\Sigma_{1,2}$
  using the Heegaard dual of the Iwahori--Hecke operator
  recently introduced in our previous article.
  We show that the automorphisms of our algebra
  correspond to the Dehn twists about the curves on~$\Sigma_{1,2}$.
  We also give the cluster algebraic construction
  of the the classical limit of the skein algebra,
  where
  the Dehn twists are given in terms of the cluster mutations.
\end{abstract}
\keywords{}
\subjclass[2000]{}
\maketitle

\section{Introduction}
The skein algebra  originates from
the Kauffman bracket polynomial invariant for knots,
and it is 
a fundamental tool
of topological quantum field theory
in recent   studies on topological phases of
matter
which 
stem from  the quantum Hall
effect~\cite{Pachos12Book,Simon23Book}.
For instance, we can see that   topological entanglement  entropy
of anyonic quasi-particles
depends on their quantum dimension~\cite{KHikami07c}.
One of recent approaches to the skein algebra is the double affine
Hecke algebra (DAHA),
which
was originally introduced to study the
Macdonald polynomial associated to the root
systems~\cite{Chered05Book,Macdonald03book}.
Motivated by the observation
on the relationship between the categorification of knot invariants
and the Macdonald polynomial~\cite{AganaShaki12a},
realized was
that the DAHA is useful for constructing quantum
invariants for links on the once-punctured torus $\Sigma_{1,1}$
and the 4-punctured
sphere $\Sigma_{0,4}$~\cite{IChered13a,IChered16a,GorskNegut15a}.
The relationship between the DAHA and the skein algebra is  
expected  to be useful in the quantized Coulomb
branch~\cite{AlleShan24a}
and in the scattering diagram~\cite{Bouss23a}.

The purpose of this article is to construct a generalization of DAHA
for the skein algebra on the twice-punctured torus $\Sigma_{1,2}$.
In our previous article~\cite{KHikami24a},
proposed was the generalized DAHA for
the skein algebra on $\Sigma_{2,0}$ by introducing
the Heegaard dual of the  Iwahori--Hecke operator of the $C^\vee
C_1$-type DAHA.
We shall make use of the Heegaard dual operator to construct the 
generalized DAHA for the skein algebra on $\Sigma_{1,2}$.
We show that the automorphisms of the generalized DAHA
can be interpreted as
the Dehn twists.
It should be noted~\cite{ChekhShapi22a,FockChek99a} that
the  skein algebra on $\Sigma_{1,2}$
is related
to the
symplectic structure of the upper triangular
matrix~\cite{Bonda04a,Ugagl99a,Boalc01a}
based on the Stokes data of  Frobenius manifolds~\cite{Dubro95}.

%

This paper is organized as follows.
In Section~\ref{sec:mapping} we summarize the skein algebra
$\Sk_A(\Sigma_{1,2})$ and the mapping class group
of the twice-puncture torus.
In Section~\ref{sec:Hecke} is for the DAHA representation.
Based on the isomorphism between the skein algebra
$\Sk_A(\Sigma_{0,4})$ on the 4-punctured sphere and
the $C^\vee C_1$-type DAHA $H_{q,\mathbf{t}}$,
we study $\Sk_A(\Sigma_{1,2})$.
Using the Heegaard dual operator defined in~\cite{KHikami24a}, we
define  $H_{q,\mathbf{t}_\natural}^{gen}$, and show that the automorphism is
consistent with the Dehn twists.
We give explicitly
the $q$-difference operators for variants of the torus knots
$T_{2,2n+1}$ in section~\ref{sec:examples}.
Section~\ref{sec:cluster} is for the classical limit of the skein
algebra.
To see a connection with  a symplectic structure of the upper
triangular matrix, we give explicitly a cluster algebraic description.

Throughout this article, we use the variants of the hyperbolic
functions;
\begin{equation}
  \label{eq:10}
  \ch(x) = x+x^{-1} ,
  \qquad
  \sh(x)= x- x^{-1} .
\end{equation}

\section{Mapping Class Group and Skein Algebra}
\label{sec:mapping}
\subsection{Mapping Class Group}
The mapping class group
$\Mod(\Sigma_{g,n})$ of surface $\Sigma_{g,n}$
is generated by a finite set of the (left-handed) Dehn
twist
$\mathscr{D}_{\mathbb{k}}$
about a simple closed curve $\mathbb{k}$ on $\Sigma_{g,n}$.
When the intersection number of simple closed curves $\mathbb{k}$ and
$\mathbb{k}^\prime$ is zero,
the Dehn twists are commutative,
$\mathscr{D}_{\mathbb{k}} \mathscr{D}_{\mathbb{k}^\prime}
=
\mathscr{D}_{\mathbb{k}^\prime} \mathscr{D}_{\mathbb{k}}$.
See, \emph{e.g.}~\cite{Birman74,FarbMarg11Book}, for  other  properties of the
Dehn twists.

In our  case of the twice-punctured torus~$\Sigma_{1,2}$,
we label the simple closed curves $\mathbb{k}_1$,
$\mathbb{k}_2$,
$\mathbb{k}_3$
and the boundary curves $\mathbb{b}_1$ and $\mathbb{b}_2$ as in
Fig.~\ref{fig:curves}.
Amongst others $\mathbb{x}$ is a separating  curve.
The mapping class group for $\Sigma_{1,2}$  is given by~\cite{ParkeSerie04a}
\begin{equation}
  \label{Mod_12}
  \Mod(\Sigma_{1,2})=
  \left\langle
    \mathscr{D}_1,
    \mathscr{D}_2,
    \mathscr{D}_3    
    ~\middle|~
    \begin{matrix}
      \mathscr{D}_{1,3}=\mathscr{D}_{3,1},
      \mathscr{D}_{1,2,1}=\mathscr{D}_{2,1,2},
      \mathscr{D}_{2,3,2}=\mathscr{D}_{3,2,3},
      \\
      \left(
      \mathscr{D}_{1,2,3}
      \right)^4 =1
    \end{matrix}
  \right\rangle .
\end{equation}
Here
we mean
$\mathscr{D}_i=\mathscr{D}_{\mathbb{k}_i}$,
and
we use
$\mathscr{D}_{i,j,\dots,k}
=\mathscr{D}_i \mathscr{D}_j\dots
\mathscr{D}_k$ for brevity.
Hereafter
the Dehn twisted simple closed curve is
denoted as
\begin{equation}
  \label{define_twist_curve}
  \mathbb{k}_{i, \pm a, \pm b,\dots, \pm c}
  =\mathscr{D}_{c}^{\pm 1}
  \dots
  \mathscr{D}_b^{\pm 1} 
  \mathscr{D}_a^{\pm 1}
  (\mathbb{k}_i).
\end{equation}
We additionally use notations
such as
\begin{equation}
  \mathbb{k}_{i,a^n}=\mathscr{D}_a^n(\mathbb{k}_i) .
\end{equation}
See Fig.~\ref{fig:curves} for some examples.

\begin{figure}[tbhp]
  \centering
  \includegraphics[scale=.7]{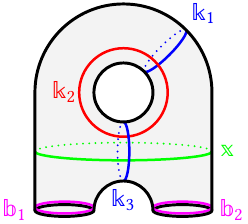}
  \qquad
  \includegraphics[scale=.7]{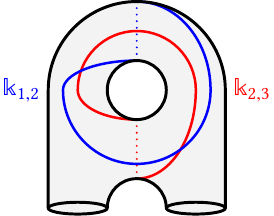}
  \qquad
  \includegraphics[scale=.7]{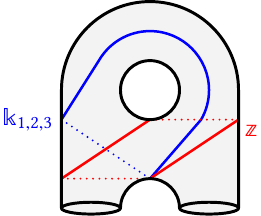}
  \caption{Simple closed curves on $\Sigma_{1,2}$.
    In the left figure,
    shown are  the curves
    $\{\mathbb{k}_1, \mathbb{k}_2, \mathbb{k}_3\}$, the boundary curves
    $\{\mathbb{b}_1, \mathbb{b}_2\}$ of the punctures,
    and the separating curve $\mathbb{x}$.    
    We employ the notation~\eqref{define_twist_curve},
    and
    we denote
    $\mathbb{z}=\mathbb{k}_{1,2,3,-1,-2}$
    in the right figure.  
  }
  \label{fig:curves}
\end{figure}

We remark
that,
for
the fundamental group
\begin{equation}
  \label{eq:2}
  \pi_1(\Sigma_{1,2})
  =
  \left\langle
    \mathrm{A}, \mathrm{B}, \mathrm{C}_1, \mathrm{C}_2
    ~\middle|~
    \mathrm{A} \,  \mathrm{B} \,
    \mathrm{A}^{-1} \mathrm{B}^{-1}
    \mathrm{C}_1 \mathrm{C}_2 =1
  \right\rangle ,
\end{equation}
the actions of
the  Dehn twists $\mathscr{D}_i$ 
correspond to the automorphisms~$\delta_i$ defined by~\cite{ParkeSerie04a}
\begin{equation}
  \label{eq:35}
  \delta_1:
  \begin{pmatrix}
    \mathrm{A}\\ \mathrm{B}\\
    \mathrm{C}_1 \\ \mathrm{C}_2
  \end{pmatrix}
  \mapsto
  \begin{pmatrix}
    \mathrm{A}\\ \mathrm{B} \, \mathrm{A}^{-1}\\
    \mathrm{C}_1 \\ \mathrm{C}_2
  \end{pmatrix}
  ,
  \qquad
  \delta_2:
  \begin{pmatrix}
    \mathrm{A}\\ \mathrm{B}\\
    \mathrm{C}_1 \\ \mathrm{C}_2
  \end{pmatrix}
  \mapsto
  \begin{pmatrix}
    \mathrm{A} \,  \mathrm{B}\\ \mathrm{B} \\
    \mathrm{C}_1 \\ \mathrm{C}_2
  \end{pmatrix}
  ,
  \qquad
  \delta_3:
  \begin{pmatrix}
    \mathrm{A}\\ \mathrm{B}\\
    \mathrm{C}_1 \\ \mathrm{C}_2
  \end{pmatrix}
  \mapsto
  \begin{pmatrix}
    \mathrm{C}_2 \mathrm{A} \, \mathrm{C}_2^{-1} \\
    \mathrm{B} \,  \mathrm{A}^{-1} \mathrm{C}_2^{-1} \\
    \mathrm{C}_1 \\
    \mathrm{C}_2 \mathrm{A} \,
    \mathrm{C}_2 \mathrm{A}^{-1} \mathrm{C}_2^{-1}
  \end{pmatrix}
  .
\end{equation}

\subsection{Skein Algebra}

The skein algebra $\Sk_A(\Sigma)$ on surface $\Sigma$ is
generated by isotopy classes of framed  links in $\Sigma \times [0,1]$
satisfying
\begin{gather}
  \label{skein_relation}
  \vcenter{\hbox{
    \begin{tikzpicture}
      \draw [line width=1.2pt](0,1) --( 1,0) ;
      \draw[line width=10pt,white](0,0)--(1,1);
      \draw[line width=1.2pt](0,0)--(1,1);
    \end{tikzpicture}
  }}
  =
  A \,
  \vcenter{\hbox{
    \begin{tikzpicture}
      \draw [line width=1.2pt] (1,1) to[out=-135,in=135] (1,0);
      \draw [line width=1.2pt] (0,1) to[out=-45,in=45] (0,0);
    \end{tikzpicture}
  }}
  +A^{-1} \,
  \vcenter{\hbox{
    \begin{tikzpicture}
      \draw [line width=1.2pt] (1,1) to[out=-135,in=-45] (0,1);
      \draw [line width=1.2pt] (1,0) to[out=135,in=45] (0,0);
    \end{tikzpicture}
  }}
  ,
  \\[2mm]
  \vcenter{\hbox{
    \begin{tikzpicture}
      \draw [line width=1.2pt] (0,0) circle (0.5) ;
    \end{tikzpicture}
  }}
  = -A^2 - A^{-2} .
\end{gather}
A multiplication
$\mathbb{x} \, \mathbb{y}$
of links $\mathbb{x}$ and $\mathbb{y}$ on $\Sigma$
means
that $\mathbb{x}$ is vertically above $\mathbb{y}$,
\begin{gather}
  \mathbb{x} \, \mathbb{y}
  =
  \newcolumntype{C}{>{$}c<{$}}
  \begin{tabular}{|C|}
    \hline
    \hphantom{aa}\mathbb{x}\hphantom{aa}
    \\
    \hline
    \mathbb{y}
    \\
    \hline
  \end{tabular}
\end{gather}
In case that two simple closed curves $\mathbb{x}$ and $\mathbb{y}$
intersect exactly once, we have
\begin{equation}
  \label{eq:11}
  \mathscr{D}_{\mathbb{x}}^{\pm 1}(\mathbb{y})
  =
  \frac{1}{A^{\pm 2} - A^{\mp 2}}
  \left(
    A^{\pm 1} \mathbb{x} \, \mathbb{y} -
    A^{\mp 1}\mathbb{y} \, \mathbb{x}
  \right) .
\end{equation}


In our previous paper~\cite{KHikami19a},
we studied  the skein algebra $\Sk_A(\Sigma_{1,2})$ on the
twice-punctured torus.
Combining the
skein algebras on
the sub-surfaces~$\Sigma_{1,1}$ and~$\Sigma_{0,4}$
(see Fig.~\ref{fig:subsurface}),
we have the relations for  the simple closed curves in
Fig.~\ref{fig:curves} as
\begin{gather}
  \label{kk_11}
  \Rel(\mathbb{k}_1, \mathbb{k}_2, \mathbb{k}_{1,2}),
  \\
  \label{q_character_11}
  \mathbb{x}
  =
  A  \, \mathbb{k}_1 \,  \mathbb{k}_2 \, \mathbb{k}_{1,2}
  -A^2 \, \mathbb{k}_1^2 - A^{-2} \mathbb{k}_2^2
  -A^2 \mathbb{k}_{1,2}^2
  +A^2+A^{-2} ,
  \\
  \label{kk_04_1}
  A^2 \mathbb{x} \,  \mathbb{k}_3 -A^{-2} \mathbb{k}_3 \, \mathbb{x}
  =
  \left(A^4-A^{-4} \right) \mathbb{z} +
  \left( A^2-A^{-2}\right)
  \left( \mathbb{b}_1+\mathbb{b}_2 \right) \, \mathbb{k}_1 ,
  \\
  A^2 \mathbb{k}_3 \,  \mathbb{z} -A^{-2} \mathbb{z} \, \mathbb{k}_3
  =
  \left(A^4-A^{-4} \right) \mathbb{x} +
  \left( A^2-A^{-2}\right)
  \left( \mathbb{b}_1\mathbb{b}_2 +  \mathbb{k}_1^2 \right),
  \\
  A^2 \mathbb{z} \,  \mathbb{x} -A^{-2} \mathbb{x} \, \mathbb{z}
  =
  \left(A^4-A^{-4} \right) \mathbb{k}_3 +
  \left( A^2-A^{-2}\right)
  \left( \mathbb{b}_1+\mathbb{b}_2 \right) \, \mathbb{k}_1 ,
  \\
  \label{q_character_04}
  \begin{multlined}[b]
    A^2
    \mathbb{x} \,  \mathbb{k}_3 \, \mathbb{z}
    =
    A^4 \, \mathbb{x}^2 + A^{-4} \, \mathbb{k}_3^2 + A^4 \, \mathbb{z}^2
    +
    A^2 \left( \mathbb{k}_1^2 + \mathbb{b}_1 \mathbb{b}_2 \right)
    \mathbb{x}
    +
    A^{-2} \left(\mathbb{b}_1+\mathbb{b}_2\right) \mathbb{k}_1
    \mathbb{k}_3
    \\
    +
    A^{2} \left(\mathbb{b}_1+\mathbb{b}_2\right) \mathbb{k}_1
    \mathbb{z}
    +
    2 \mathbb{k}_1^2 + \mathbb{b}_1^2+\mathbb{b}_2^2
    + \mathbb{k}_1^2 \, \mathbb{b}_1 \,  \mathbb{b}_2 -
    \left(A^2+A^{-2}\right)^2 .
  \end{multlined}
\end{gather}
Here
$\mathbb{z}=\mathbb{k}_{1,2,3,-1,-2}$
as in Fig.~\ref{fig:curves}, and
$\Rel(\mathbb{x}_0, \mathbb{x}_1, \mathbb{x}_2) $ denotes
\begin{equation}
  \label{eq:22}
  A \,  \mathbb{x}_i \, \mathbb{x}_{i+1}
  - A^{-1} \mathbb{x}_{i+1} \, \mathbb{x}_i
  =
  \left(A^2- A^{-2}\right) \mathbb{x}_{i+2} ,
\end{equation}
where the subscripts should be read modulo~$3$
for $i=0,1,2$.
See that
two sets of relations,~\eqref{kk_11}--\eqref{q_character_11}
and~\eqref{kk_04_1}--\eqref{q_character_04},
are   from the skein algebras on the sub-surfaces,
$\Sk_A(\Sigma_{1,1})$ and $\Sk_A(\Sigma_{0,4})$, respectively.
Correspondingly~\eqref{q_character_11} and~\eqref{q_character_04}
are from
the quantized character varieties for $\Sigma_{1,1}$ and
$\Sigma_{0,4}$~\cite{Oblom04a,PrzytSikor00a,BerestSamuel16a},
and the former is known as the quantum Markov equation.

\begin{figure}[tbhp]
  \centering
  \includegraphics[scale=.7]{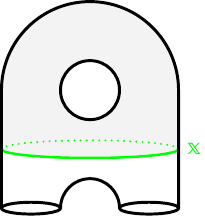}
  \qquad
  \includegraphics[scale=.7]{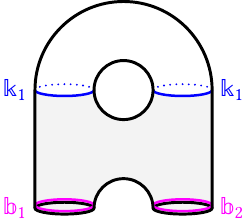}
  \caption{Sub-surfaces of $\Sigma_{1,2}$.
    The left figure indicates that
    the separating curve $\mathbb{x}$ is a boundary curve of
    sub-surface $\Sigma_{1,1}$.
    In the right,
    the sub-surface $\Sigma_{0,4}$ has  boundary curves
    $\mathbb{b}_1$, $\mathbb{b}_2$, and
    $\mathbb{k}_1$.
  }
  \label{fig:subsurface}
\end{figure}

The alternative  presentation of
the skein algebra $\Sk_A(\Sigma_{1,2})$ is as
follows~\cite{Przyty97a,BulloPrzyt99a,FockChek99a};
\begin{gather}
  \label{skein_Przytycki_1}
  \Rel(\mathbb{k}_1, \mathbb{k}_2, \mathbb{k}_{1,2}),
  \quad
  \Rel(\mathbb{k}_{1,2}, \mathbb{k}_3, \mathbb{k}_{1,2,3}) 
  \quad
  \Rel(\mathbb{k}_2, \mathbb{k}_3, \mathbb{k}_{2,3}),
  \quad
  \Rel(\mathbb{k}_1, \mathbb{k}_{2,3}, \mathbb{k}_{1,2,3}), 
  \\
  \mathbb{k}_1 \, \mathbb{k}_3=\mathbb{k}_3  \, \mathbb{k}_1 ,
  \qquad
  \mathbb{k}_2  \, \mathbb{k}_{1,2,3}
  = \mathbb{k}_{1,2,3} \, \mathbb{k}_2 ,
  \\
  \mathbb{k}_{2,3} \, \mathbb{k}_{1,2} =
  A^2 \mathbb{k}_1 \, \mathbb{k}_3
  +A^{-2} \mathbb{k}_2 \, \mathbb{k}_{1,2,3}
  + \mathbb{b}_1 +\mathbb{b}_2  ,
  \\
  \label{skein_coupled_Markov}
  \begin{multlined}[b][.9\textwidth]
    \mathbb{k}_1 \, \mathbb{k}_3 \, \mathbb{k}_2 \, \mathbb{k}_{1,2,3}
    +A^4 \mathbb{k}_1^2
    + \mathbb{k}_2^2    + A^{-4}  \mathbb{k}_3^2
    + A^4 \mathbb{k}_{1,2,3}^2
    +A^4 \mathbb{k}_{1,2}^2 +A^{-4} \mathbb{k}_{2,3}^2
    \\
    =
    A^3 \mathbb{k}_1 \, \mathbb{k}_2 \,  \mathbb{k}_{1,2}+
    A^3 \mathbb{k}_{1,2}\,  \mathbb{k}_3 \, \mathbb{k}_{1,2,3}
    + A^{-3} \mathbb{k}_3 \,  \mathbb{k}_2 \, \mathbb{k}_{2,3}
    +A^{-1} \mathbb{k}_1 \, \mathbb{k}_{2,3} \,  \mathbb{k}_{1,2,3}
    \\
    + \mathbb{b}_1  \, \mathbb{b}_2
    +\left( A^2+A^{-2} \right)^2 .
  \end{multlined}
\end{gather}
Here
the last
relation~\eqref{skein_coupled_Markov} is given in a slightly different
expression from the literature
so that
we may regard it as
the quantized  ``coupled'' Markov  triples.
As will be discussed in Section~\ref{sec:cluster},
it is known~\cite{ChekhShapi22a,FockChek99a} that~\eqref{skein_Przytycki_1}--\eqref{skein_coupled_Markov}
can be regarded as a quantization of 
a symplectic groupoid of triangular matrices~\cite{Bonda04a,Ugagl99a,Boalc01a}.

\section{Double Affine Hecke Algebra}
\label{sec:Hecke}
\subsection{Spherical DAHA}

In~\cite{KHikami19a} constructed
was the map
\begin{equation}
  \label{map_A_End}
  \mathcal{A}: \Sk_{A=q^{-\frac{1}{4}}}(\Sigma_{1,2}) \to
  \End
  \mathbb{C}\left(q^{\frac{1}{4}},x_0, b_1,b_2 \right)
  \left[x+x^{-1}\right] ,
\end{equation}
by combining
the double affine Hecke algebras of type-$A_1$ and
type-$C^\vee C_1$.
We have for the simple closed curves in Fig.~\ref{fig:curves} as
follows;
\begin{subequations}
  \label{A_k_x_b}
\begin{align}
  \label{A_k1}
  \mathcal{A}(\mathbb{k}_1)
  &=\ch(x_0),
  \\
  \label{A_k2}
  \mathcal{A}(\mathbb{k}_2)
  &  = \I \, q^{-\frac{1}{4}} \, G_0(x_0 ; x),
  \\
  \label{A_k3}
  \mathcal{A}(\mathbb{k}_3) 
  & =
    \sum_{\epsilon=\pm}
    W(x^\epsilon;x_0,b_1,b_2)
    \left(\eth^\epsilon -1 \right)
    -
    \ch \left( q^{-\frac{1}{2}} \, b_1 \, x_0\right) ,
  \\
  \mathcal{A}(\mathbb{x})
  & = \ch(x),
  \\
  \mathcal{A}(\mathbb{b}_1)
  & =\ch (b_1) ,
  \\
  \mathcal{A}(\mathbb{b}_2)
  & =\ch (b_2) .
\end{align}
\end{subequations}
Here and hereafter $\eth$ is the $q$-shift operator for $x$,
\begin{equation}
  \label{eq:30}
    (\eth^{\pm 1} f)(x, x_0)= f(q^{\pm 1}\, x, x_0),
\end{equation}
and
\eqref{A_k3} is nothing but the reduced  Askey--Wilson operator whose
potential is given by
\begin{equation}
  \label{eq:19}
  W(x; x_0,b_1,b_2)=
  -\frac{
    \left(b_1 \, b_2 +q^{\frac{1}{2}} x\right)
    \left(b_1+b_2 \, q^{\frac{1}{2}} x \right)
    \left(q^{\frac{1}{2}} x+x_0^2\right)}{
    b_1 \,  b_2 \,  q^{\frac{1}{2}} \left(1-x^2 \right)
    \left(1-q^{\frac{1}{2}} x\right) x_0} .
\end{equation}
In~\eqref{A_k2}, we have used
the $q$-difference operators for $x_0$ defined by~\cite{KHikami24a}
\begin{equation}
  \label{def_G_op}
  G_n(x_0 ; x)
  =
  \frac{-x_0^{-n}}{1-x_0^{2}} \, \eth_0
  +
  \frac{
    x_0^n
    \left(q^{\frac{1}{2}} x + x_0^{2}\right)
    \left(q^{\frac{1}{2}} +  x \, x_0^{2} \right)}{
    q^{\frac{1}{2}} \, x  \left( 1-x_0^{2} \right)}  \,
  \eth_0^{-1} ,
\end{equation}
where the $q$-shift operator~$\eth_0$ is
\begin{equation}
  \label{eq:8}
  \left( \eth_0^{\pm 1} f \right)(x,x_0)
  =f
  ( x,  q^{\pm \frac{1}{2}} x_0 ) .
\end{equation}
In the following we also use the $q$-difference operator for $x_0$
defined by~\cite{KHikami24a}
\begin{equation}
  \label{def_K_op}
  K_n(x_0; x)
  =
    \frac{-x_0^{-n}}{1-x_0^2} \, \eth_0
    +
    \frac{
    x_0^n
    \left(q^{\frac{1}{2}} x + x_0^{2}\right)
    \left(q^{\frac{3}{2}} x+x_0^{2} \right)}{
    q\, x  \left( 1-x_0^{2} \right)} \, \eth_0^{-1} .
\end{equation}
Messy but plain calculations prove that they satisfy
\begin{gather}
  \label{K_G_G_K}
  K_n(x_0 ; x) \, G_n(x_0 ; q\, x)
  = G_n(x_0 ; x) \, K_n(x_0 ; x),
  \\
  \label{K_K-G_G}
  K_n(x_0 ; x^{-1}) \, K_n(x_0 ; q^{-1} x)
  - \left[ G_n(x_0 ; x) \right]^2 =
  -q^{\frac{n}{2}} x^{-1} \left( q^{\frac{1}{2}}-x\right)^2
  ,
  \\
  \label{recursion_K_G}
  \begin{multlined}[b][.8\textwidth]
    \begin{pmatrix}
      K_{n+1}(x_0 ; x^{-1}) & G_{n+1}(x_0 ; x)
    \end{pmatrix}
    \\
    =
    \frac{q^{\frac{1}{2}}}{q^{\frac{1}{2}}-x}
    \begin{pmatrix}
      K_{n}(x_0 ; x^{-1}) & G_{n}(x_0 ; x)
    \end{pmatrix}
    \begin{pmatrix}
      -x \ch(x_0)
      &  - \frac{x+q^{\frac{1}{2}}x_0^2}{x_0}
      \\
      \frac{q^{\frac{1}{2}}+x x_0^2}{x_0}
      &
        q^{\frac{1}{2}} \ch(x_0)
    \end{pmatrix} .
  \end{multlined}
\end{gather}

As was shown in~\cite{KHikami19a},
the skein relations~\eqref{kk_11}--\eqref{q_character_04} are
fulfilled with~\eqref{A_k_x_b} and
\begin{subequations}
  \begin{align}
    \label{A_k12}
    \mathcal{A}(\mathbb{k}_{1,2})
    & = \I \, G_{-1} (x_0 ; x) ,
    \\
    \label{A_z}
    \mathcal{A}(\mathbb{z})
    & =
      \sum_{\epsilon=\pm}
      W(x^\epsilon; x_0, b_1, b_2) \, \left(
      q^{\frac{1}{2}} x^\epsilon \eth^\epsilon -1
      \right)
      - \ch \left(q^{-\frac{1}{2}} b_1 \, x_0 \right) .
  \end{align}
  For the
  relations~\eqref{skein_Przytycki_1}--\eqref{skein_coupled_Markov},
  we need
  \begin{align}
    \label{A_k23}
    &
      \begin{multlined}[b][.8\textwidth]
        \mathcal{A}(\mathbb{k}_{2,3})
        =
        \I  \sum_{\epsilon=\pm}
        W(x^\epsilon;x_0,b_1,b_2)
          \frac{ x_0 \, x^\epsilon }{q^{\frac{1}{2}}x^\epsilon+x_0^2}
          K_1(x_0; x^\epsilon) \, \eth^\epsilon
        \\
        +\I \,
        \frac{(b_1+b_2) (1+b_1 b_2) x}{
          b_1 b_2 \left( q^{\frac{1}{2}}-x \right)
          \left( 1-q^{\frac{1}{2}}x \right)} \,
        G_1(x_0 ; x)
        ,
      \end{multlined}
    \\
    \label{A_k123}
    &
      \begin{multlined}[b][.8\textwidth]
        \mathcal{A}(\mathbb{k}_{1,2,3})
        =
        \I \sum_{\epsilon=\pm}
        W(x^\epsilon; x_0, b_1, b_2)
        \frac{q^{\frac{1}{4}} x_0 \,  x^\epsilon}{q^{\frac{1}{2}}
          x^\epsilon+x_0^2}
        K_0(x_0; x^\epsilon)
        \eth^\epsilon
        \\
        +\I \,
        \frac{q^{\frac{1}{4}}
          \left( b_1+b_2\right)
          \left(1+b_1 b_2\right) \, x}{
          b_1 b_2 \left(q^{\frac{1}{2}}-x\right)
          \left(1-q^{\frac{1}{2}}x\right)
        } \,
        G_0(x_0 ; x) .
      \end{multlined}
  \end{align}
\end{subequations}

We note that
\begin{equation}
  \label{k_21n}
  \mathcal{A}(\mathbb{k}_{2,1^n})
  = \I \, q^{-\frac{n+1}{4}}G_n(x_0 ; x) ,
\end{equation}
and that the skein relations
\begin{subequations}
  \label{skein_1-2_1n}
  \begin{gather}
    \mathbb{k}_1 \mathbb{k}_{2,1^n}
    =A \, \mathbb{k}_{2,1^{n-1}}+ A^{-1} \mathbb{k}_{2,1^{n+1}},
    \\
    \mathbb{k}_{2,1^n}\mathbb{k}_1
    =A^{-1} \, \mathbb{k}_{2,1^{n-1}}+ A \, \mathbb{k}_{2,1^{n+1}},
  \end{gather}
\end{subequations}
correspond to
\begin{subequations}
  \begin{gather}
    G_n(x_0 ; x) \, \ch(x_0)
    =
    q^{\frac{1}{2}} G_{n-1}(x_0 ; x) + q^{-\frac{1}{2}}G_{n+1}(x_0 ; x),
    \\
    \label{x0_G_n}
    \ch(x_0) \, G_n(x_0 ; x)
    =
    G_{n-1}(x_0 ; x) + G_{n+1}(x_0 ; x) .
  \end{gather}
\end{subequations}

\subsection{Generalized DAHA}
The DAHA of $C^\vee C_1$-type
$H_{q,\mathbf{t}}=
H_{q,t_0,t_1,t_2,t_3}$ is generated by
$\mathsf{T}_0$, $\mathsf{T}_1$,
$\mathsf{T}_0^\vee=q^{-\frac{1}{2}} \mathsf{T}_0^{-1}\mathsf{X}$, and
$\mathsf{T}_1^\vee=\mathsf{X}^{-1} \mathsf{T}_1^{-1}$,
satisfying the Iwahori--Hecke relations
(see, \emph{e.g.},~\cite{Macdonald03book,NoumiStokm00a}),
\begin{equation}
  \label{Hecke_original}
  \begin{alignedat}{2}
    &
      (\mathsf{T}_0 - t_0^{-1}) (\mathsf{T}_0+t_0)=0,
    &\qquad \qquad
    &
      (\mathsf{T}_0^\vee - t_2^{-1}) (\mathsf{T}_0^\vee+t_2)=0 ,
      \\
    &
      (\mathsf{T}_1 - t_1^{-1}) (\mathsf{T}_1+t_1)=0,
    &\qquad \qquad
    &
      (\mathsf{T}_1^\vee - t_3^{-1}) (\mathsf{T}_1^\vee+t_3)=0 .
  \end{alignedat}
\end{equation}
These are  the Hecke operators for the Askey--Wilson
polynomials~\cite{NoumiStokm00a}, and
we have the homomorphism from $\Sk_{A=q^{-\frac{1}{4}}}(\Sigma_{0,4})$~\cite{Oblom04a}.

We fix the parameters~$\mathbf{t}$ as~\cite{KHikami19a}
\begin{equation}
  \label{t_natural}
  \mathbf{t}_\natural=
  \left(
    \I  \, x_0, \I \,  q^{-\frac{1}{2}} b_1,
    \I  \, x_0, \I \, b_2
  \right) ,
\end{equation}
and the polynomial representations of the Iwahori--Hecke operators are given by
\begin{equation}
  \label{eq:27}
  \begin{aligned}
    \mathsf{T}_0
    & \mapsto
      \I \frac{x}{q^{\frac{1}{2}} - x}
      \left(
      -\frac{q^{\frac{1}{2}}+x_0^2 \, x}{x_0 \, x} \,
      \mathsf{s} \, \eth + x_0+x_0^{-1}
      \right),
    \\
    \mathsf{T}_1
    & \mapsto
      \I \left(
      -\frac{q^{\frac{1}{2}}}{b_1} +
      \frac{
      \left(b_1 \,  b_2+q^{\frac{1}{2}} x \right)
      \left( b_1+q^{\frac{1}{2}}b_2 \, x\right)
      }{q^{\frac{1}{2}} b_1 \, b_2
      \left( 1-x^2\right)
      }
      \left( \mathsf{s} -1\right)
      \right) ,
    \\
    \mathsf{X}
    & \mapsto x,
  \end{aligned}
\end{equation}
where
\begin{equation}
  \label{eq:4}
  \mathsf{s} \, x=x^{-1} \, \mathsf{s} .
\end{equation}
Due to that the sub-surface $\Sigma_{0,4}$ in Fig.~\ref{fig:subsurface}
has two identical boundary monodromies corresponding
to~$\mathbb{k}_1$,
the parameters $t_0$ and $t_2$ are identified in $\mathbf{t}_\natural$~\eqref{t_natural}.
We see that
the Iwahori--Hecke relations~\eqref{Hecke_original} reduce to
\begin{equation}
  \label{T0_Hecke}
  \begin{aligned}
    &
      \begin{aligned}[b]
        \sh\left( \mathsf{T}_0\right)
        & =
          -\I \ch(x_0)
        \\
        &=
          \sh \left(
          q^{-\frac{1}{2}} \mathsf{T}_0^{-1} \mathsf{X}
          \right)
          =
          \sh \left(
          q^{-\frac{1}{2}} \mathsf{X} \mathsf{T}_0^{-1}
          \right)
          ,
      \end{aligned}
    \\
    &
      \sh \left( \mathsf{T}_1 \right)
      = -\I \ch (q^{-\frac{1}{2}} b_1) ,
    \\
    &
      \sh \left( \mathsf{T}_1 \mathsf{X} \right)
      = \I \ch(b_2) .
  \end{aligned}
\end{equation}

Following~\cite{KHikami24a}, we define the Heegaard dual $\mathsf{U}_0$
of the
Iwahori--Hecke operator~$\mathsf{T}_0$ by
\begin{equation}
  \label{define_U0}
  \mathsf{U}_0
  \mapsto
  \frac{q^{-\frac{1}{4}} x}{q^{\frac{1}{2}} - x}
  K_0(x_0;  x^{-1}) \, \mathsf{s} \, \eth
  -
  \frac{q^{-\frac{1}{4}} x}{q^{\frac{1}{2}} - x}
  G_0(x_0 ;  x) .
\end{equation}
We find with a help of~\eqref{K_G_G_K}--\eqref{K_K-G_G} that
this satisfies  Hecke-type relations
\begin{gather}
  \label{U0_Hecke}
  \begin{aligned}[b]
    \sh \left(\mathsf{U}_0\right)
    &   =
      q^{-\frac{1}{4}} G_0(x_0; x)
    \\
    &  =
      \sh\left(q^{-\frac{1}{2}}  \mathsf{U}_0^{-1} \mathsf{X}
      \right)
      =
      \sh\left(q^{-\frac{1}{2}} \mathsf{X}\, \mathsf{U}_0^{-1} 
      \right) .    
  \end{aligned}
\end{gather}
We can also  see  that~\cite{KHikami24a}
\begin{equation}
  \label{T0U0T0U0X}
  \mathsf{T}_0^{-1}\mathsf{U}_0 \mathsf{T}_0 \mathsf{U}_0^{-1} \mathsf{X}
  = -q   .
\end{equation}

We denote the generalized DAHA
$H_{q,\mathbf{t}_\natural}^{gen}$
as an addition of
$\mathsf{U}_0$ to $H_{q,\mathbf{t}_\natural}$.
The generalized
spherical DAHA is then
\begin{equation}
  \label{eq:33}
  SH_{q,\mathbf{t}_\natural}^{gen}
  =
  \mathsf{e}  \, H_{q,\mathbf{t}_\natural}^{gen}\, \mathsf{e} ,
\end{equation}
where
the idempotent is
\begin{equation}
  \label{eq:5}
  \mathsf{e}
  =
   \frac{b_1}{q-b_1^2}
  \left(
    - b_1 + \I \, q^{\frac{1}{2}} \mathsf{T}_1
  \right).
\end{equation}
See that
\begin{gather}
  \mathsf{e}^2 = \mathsf{e},
  \\
  \label{T1_and_e}
  \mathsf{T}_1^{\pm 1} \, \mathsf{e}
  =
  \mathsf{e} \, \mathsf{T}_1^{\pm 1}
  =
  \left( -\I \, q^{\frac{1}{2}}b_1^{-1} \right)^{\pm 1} \, \mathsf{e} .
\end{gather}

Our claim on  the map~\eqref{map_A_End} is  as follows.
\begin{theorem}
  \label{thm:map}
  We have
  \begin{equation}
    \mathcal{A}: \Sk_{A=q^{-\frac{1}{4}}}(\Sigma_{1,2})
    \to
    SH_{q,\mathbf{t}_\natural}^{gen} .
  \end{equation}


  Furthermore 
  we have a commutative diagram for the Dehn twists~\eqref{Mod_12};
  \begin{equation}
    \begin{tikzcd}
      \Sk_{A=q^{-\frac{1}{4}}}(\Sigma_{1,2})\arrow[r, "\mathscr{D}_i"]
      \arrow[d, "\mathcal{A}"']
      & \Sk_{A=q^{-\frac{1}{4}}}(\Sigma_{1,2})
      \arrow[d, "\mathcal{A}"]
      \\
      SH_{q,\mathbf{t}_\natural}^{gen}
      \arrow[r, "\mathscr{T}_i"']
      &
      SH_{q,\mathbf{t}_\natural}^{gen}
    \end{tikzcd}
  \end{equation}
  Here
  $\mathscr{T}_i$ is 
  the  automorphism of $H_{q,\mathbf{t}_\natural}^{gen}$ defined by
  \begin{subequations}
    \begin{align}
      \label{eq:7}
      \mathscr{T}_1:
      &
        \begin{pmatrix}
          \mathsf{T}_0 \\ \mathsf{T}_1 \\ \mathsf{X} \\ \mathsf{U}_0
        \end{pmatrix}
        \mapsto
        \begin{pmatrix}
          \mathsf{T}_0 \\ \mathsf{T}_1 \\ \mathsf{X} \\
          -\I \, q^{\frac{1}{4}} \mathsf{U}_0\mathsf{T}_0^{-1}
        \end{pmatrix} ,
      \\
      \mathscr{T}_2:
      &
        \begin{pmatrix}
          \mathsf{T}_0 \\ \mathsf{T}_1 \\ \mathsf{X} \\ \mathsf{U}_0
        \end{pmatrix}
        \mapsto
        \begin{pmatrix}
          \I \, q^{-\frac{1}{4}} \mathsf{U}_0 \mathsf{T}_0
          \\ \mathsf{T}_1 \\ \mathsf{X} \\ \mathsf{U}_0
        \end{pmatrix} ,
      \\
      \mathscr{T}_3:
      &
        \begin{pmatrix}
          \mathsf{T}_0 \\ \mathsf{T}_1 \\ \mathsf{X} \\ \mathsf{U}_0
        \end{pmatrix}
        \mapsto
        \begin{pmatrix}
          \mathsf{T}_0 \\ \mathsf{T}_1 \\
          \left(\mathsf{T}_0 \mathsf{T}_1\right)^{-1}
          \mathsf{X} \mathsf{T}_1 \mathsf{T}_0
          \\
          q^{-\frac{1}{4}} \left( \mathsf{T}_0 \mathsf{T}_1 \right)^{-1}
          \mathsf{U}_0
        \end{pmatrix} .
    \end{align}
  \end{subequations}
\end{theorem}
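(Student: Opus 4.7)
The plan is to prove the theorem in two stages: first, verify that the image of $\mathcal{A}$ actually lies in the spherical subalgebra $SH_{q,\mathbf{t}_\natural}^{gen}$; then, check that each $\mathscr{T}_i$ is a well-defined automorphism of $H_{q,\mathbf{t}_\natural}^{gen}$ making the stated diagram commute.

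For the first stage, I would identify the generating skein curves with explicit sphericalized elements of the generalized DAHA. The reduced Hecke-type relations~\eqref{T0_Hecke} and~\eqref{U0_Hecke} immediately give $\mathcal{A}(\mathbb{k}_1)=\I\sh(\mathsf{T}_0)$ and $\mathcal{A}(\mathbb{k}_2)=\I\sh(\mathsf{U}_0)$, while $\mathcal{A}(\mathbb{x})=\ch(\mathsf{X})$; the boundary curves $\mathbb{b}_j$ are absorbed in the Hecke parameters $\mathbf{t}_\natural$. For $\mathcal{A}(\mathbb{k}_3)$, $\mathcal{A}(\mathbb{z})$, $\mathcal{A}(\mathbb{k}_{1,2})$, $\mathcal{A}(\mathbb{k}_{2,3})$, and $\mathcal{A}(\mathbb{k}_{1,2,3})$, the explicit formulas~\eqref{A_k_x_b} and~\eqref{A_k12}--\eqref{A_k123} are manifestly built from $\mathsf{T}_0$, $\mathsf{T}_1$, $\mathsf{X}$, and $\mathsf{U}_0$ sandwiched by $\mathsf{e}$. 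The skein relations inherited from the sub-surfaces $\Sigma_{1,1}$ and $\Sigma_{0,4}$ were handled in the previous paper, while the mixed relations in~\eqref{skein_Przytycki_1}--\eqref{skein_coupled_Markov} reduce to the operator identities~\eqref{K_G_G_K}--\eqref{recursion_K_G} that couple $K_n$ and $G_n$.

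For the second stage, each $\mathscr{T}_i$ must first be verified to preserve the full set of defining relations of $H_{q,\mathbf{t}_\natural}^{gen}$---the Hecke-type relations~\eqref{Hecke_original}, \eqref{T0_Hecke}, and~\eqref{U0_Hecke}, together with the braiding identity~\eqref{T0U0T0U0X}. For $\mathscr{T}_1$ and $\mathscr{T}_2$ the generators $\mathsf{T}_0$, $\mathsf{T}_1$, $\mathsf{X}$ are either fixed or permuted among themselves, so only the replacement of $\mathsf{U}_0$ requires a direct check; using~\eqref{U0_Hecke} and~\eqref{T0U0T0U0X} this is a short algebraic calculation. For $\mathscr{T}_3$, conjugation by $\mathsf{T}_0\mathsf{T}_1$ automatically preserves the Hecke presentation, and the braiding identity follows after rewriting via~\eqref{Hecke_original}.

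Finally, the commutativity of the diagram may be checked on a minimal generating set, say $\mathbb{k}_1$, $\mathbb{k}_2$, $\mathbb{k}_3$ together with $\mathbb{x}$ and the central boundary curves. The Dehn twist formula~\eqref{eq:11}, combined with~\eqref{skein_1-2_1n} and~\eqref{k_21n}, reduces the $\mathscr{D}_1$-case to an inductive identity on $G_n$ that matches exactly~\eqref{recursion_K_G}; the $\mathscr{D}_2$-case follows by the $\mathbb{k}_1\leftrightarrow\mathbb{k}_2$ symmetry reflected in the swap $\mathsf{T}_0\leftrightarrow\mathsf{U}_0$. The hard part will be $\mathscr{D}_3$: here the Dehn twist nontrivially mixes $\mathbb{k}_3$, $\mathbb{z}$, and $\mathbb{k}_1$ as dictated by $\delta_3$ in~\eqref{eq:35}, and on the DAHA side one must verify that conjugating $\mathsf{X}$ by $\mathsf{T}_0\mathsf{T}_1$ together with multiplying $\mathsf{U}_0$ by $(\mathsf{T}_0\mathsf{T}_1)^{-1}$ reproduces the Dehn-twisted Askey--Wilson-type expressions arising from~\eqref{A_k3}, \eqref{A_z}, \eqref{A_k23}, and~\eqref{A_k123}. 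I expect this to be the main obstacle, to be handled by combining the Hecke relations with~\eqref{K_G_G_K}--\eqref{recursion_K_G} and, if needed, invoking the presentation~\eqref{Mod_12} to propagate the remaining identities.
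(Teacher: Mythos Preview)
Your first stage matches the paper's: the images of the generating curves are rewritten as $\ch(\cdot)$ of words in $\mathsf{T}_0,\mathsf{T}_1,\mathsf{X},\mathsf{U}_0$, and the twisted curves $\mathbb{k}_{1,2},\mathbb{k}_{2,3},\mathbb{k}_{1,2,3},\mathbb{z}$ are identified precisely as $\mathscr{T}_i$-images of simpler ones (e.g.\ $\mathcal{A}(\mathbb{k}_{2,3})\,\mathsf{e}=\mathscr{T}_3(\mathcal{A}(\mathbb{k}_2))\,\mathsf{e}$, $\mathcal{A}(\mathbb{k}_{1,2,3})\,\mathsf{e}=\mathscr{T}_3(\mathcal{A}(\mathbb{k}_{1,2}))\,\mathsf{e}$), which already records the commutativity on those instances.

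Where you diverge is the second stage. You propose to (i) check that each $\mathscr{T}_i$ preserves the Hecke-type relations of $H_{q,\mathbf{t}_\natural}^{gen}$ and then (ii) verify the diagram curve-by-curve, flagging $\mathscr{D}_3$ as the hard explicit computation. The paper does neither: its entire ``second part'' is the verification that the $\mathscr{T}_i$ satisfy the mapping class group relations of~\eqref{Mod_12} on $SH_{q,\mathbf{t}_\natural}^{gen}$. The braid relations $\mathscr{T}_{1,3}=\mathscr{T}_{3,1}$, $\mathscr{T}_{1,2,1}=\mathscr{T}_{2,1,2}$, $\mathscr{T}_{2,3,2}=\mathscr{T}_{3,2,3}$ are immediate; the substantive computation is that $(\mathscr{T}_{1,2,3})^4$ acts on the generators of $H_{q,\mathbf{t}_\natural}^{gen}$ as conjugation by $\mathsf{T}_1$, which becomes the identity on the spherical subalgebra by~\eqref{T1_and_e}. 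You never mention this check, yet it is what the reference to~\eqref{Mod_12} in the theorem is pointing at and is the core of the paper's argument. Your direct route would also establish the diagram, at the cost of the $q$-difference verifications you anticipate for $\mathscr{T}_3$; the paper's route is shorter and simultaneously yields the stronger statement that the $\mathscr{T}_i$ assemble into a genuine $\Mod(\Sigma_{1,2})$-action on $SH_{q,\mathbf{t}_\natural}^{gen}$, which your plan would leave unaddressed.
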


\begin{proof}[Proof of Theorem~\ref{thm:map}]
  The first part
  is checked from the fact
  that~\eqref{A_k1}--\eqref{A_k3}
  are written as
  \begin{subequations}
    \begin{align}
      \label{eq:13}
      &\mathcal{A}(\mathbb{k}_1)
        = \ch(\I \, \mathsf{T}_0) ,
      \\
      &
        \mathcal{A}(\mathbb{k}_2)
        = \ch (\I \, \mathsf{U}_0 ) ,
      \\
      &
        \mathcal{A}(\mathbb{k}_3) \, \mathsf{e}
        =
        \ch ( \mathsf{T}_1 \mathsf{T}_0 ) \, \mathsf{e}
        =
        \ch ( \mathsf{T}_0 \mathsf{T}_1 ) \, \mathsf{e}
        ,
    \end{align}
  \end{subequations}
  and~\eqref{A_k12}--\eqref{A_k123} are
  \begin{subequations}
    \begin{align}
      \label{eq:20}
      & \mathcal{A}(\mathbb{k}_{1,2}) \, \mathsf{e}
        =
        \mathscr{T}_2(\mathcal{A}(\mathbb{k}_1)) \, \mathsf{e}
        =
        \ch\left(
        -q^{-\frac{1}{4}} \mathsf{U}_0 \mathsf{T}_0
        \right) \, \mathsf{e},
      \\
  & \mathcal{A}(\mathbb{k}_{2,3}) \, \mathsf{e}
    =
    \mathscr{T}_3 (\mathcal{A}(\mathbb{k}_2)) \, \mathsf{e}
    =
    \ch
    \left(
    \I \, q^{-\frac{1}{4}}
    \left( \mathsf{T}_0 \mathsf{T}_1 \right)^{-1}
    \mathsf{U}_0
    \right) \, \mathsf{e},
      \\
  & \mathcal{A}(\mathbb{k}_{1,2,3}) \, \mathsf{e}
    =\mathscr{T}_3(\mathcal{A}(\mathbb{k}_{1,2})) \, \mathsf{e}
    =\ch\left(
    q^{\frac{1}{2}} \mathsf{T}_1^{-1} \mathsf{X}^{-1} \mathsf{U}_0
    \right) \, \mathsf{e} ,
      \\
      & \mathcal{A}(\mathbb{z}) \, \mathsf{e}
        =
        \mathscr{T}_{2}^{-1} \mathscr{T}_1^{-1}(
        \mathcal{A}(\mathbb{k}_{1,2,3})) \, \mathsf{e}
        =
        \ch \left(
        q^{\frac{1}{2}} \mathsf{T}_1^{-1} \mathsf{X}^{-1} \mathsf{T}_0
        \right) \, \mathsf{e} .
    \end{align}
  \end{subequations}


  For the second part,
  we need to check the corresponding relations to those for the Dehn twists~\eqref{Mod_12}.
  It is straightforward to see the braid relations,
  \begin{equation}
    \mathscr{T}_{1,3}=\mathscr{T}_{3,1},
    \qquad
    \mathscr{T}_{1,2,1}=\mathscr{T}_{2,1,2},
    \qquad
    \mathscr{T}_{2,3,2}=\mathscr{T}_{3,2,3} .
  \end{equation}
  Here as before we mean $\mathscr{T}_{i,j,\dots,k}=\mathscr{T}_i
  \mathscr{T}_j  \dots \mathscr{T}_k$.
  Moreover
  we  find
  \begin{equation}
    \left( \mathscr{T}_{1,2,3} \right)^4:
    \begin{pmatrix}
      \mathsf{T}_0 \\ \mathsf{T}_1 \\ \mathsf{X} \\ \mathsf{U}_0
    \end{pmatrix}
    \mapsto
    \mathsf{T}_1^{-1}
    \begin{pmatrix}
      \mathsf{T}_0 \\ \mathsf{T}_1 \\ \mathsf{X} \\ \mathsf{U}_0
    \end{pmatrix}
    \mathsf{T}_1 ,
  \end{equation}
  which proves
  $
  \left( \mathscr{T}_{1,2,3} \right)^4=1
  $
  on $SH_{q,\mathbf{t}_\natural}^{gen}$ from~\eqref{T1_and_e}.
\end{proof}


We note that the 2-chain relation gives the Dehn twist about the
separating curve $\mathbb{x}$ in Fig.~\ref{fig:curves} by
\begin{equation}
  \label{eq:15}
  \mathscr{D}_{\mathbb{x}}=\left(
    \mathscr{D}_{1,2}\right)^6 ,
\end{equation}
and that 
the corresponding automorphism is
\begin{equation}
  \mathscr{T}_{\mathbb{x}}=\left( \mathscr{T}_{1,2}\right)^6:
  \begin{pmatrix}
    \mathsf{T}_0 \\ \mathsf{T}_1 \\ \mathsf{X} \\ \mathsf{U}_0
  \end{pmatrix}
  \mapsto
  \begin{pmatrix}
    \mathsf{X}  \mathsf{T}_0 \mathsf{X}^{-1} \\
    \mathsf{T}_1 \\ \mathsf{X} \\
    \mathsf{X}\mathsf{U}_0 \mathsf{X}^{-1}
  \end{pmatrix} 
  .
\end{equation}
This is nothing but the automorphism of the $C^\vee C_1$ DAHA studied
in~\cite{IChered16a}.

\section{Examples of $q$-Difference Operators}
\label{sec:examples}

We should note that the polynomial representation gives
\begin{equation}
  \label{define_U_n}
  \mathsf{U}_n= \I^{-n} q^{\frac{n}{4}} \, \mathsf{U}_0
  \mathsf{T}_0^{-n} 
  \mapsto
  \frac{q^{-\frac{n+1}{4}} x}{q^{\frac{1}{2}} - x} \,
  K_n(x_0; x^{-1}) \, \mathsf{s} \, \eth
  -
  \frac{q^{-\frac{n+1}{4}} x}{q^{\frac{1}{2}} - x} \,
  G_n(x_0; x) ,
\end{equation}
which can be proved by induction 
using~\eqref{recursion_K_G}.
It is straightforward to see the Hecke-type relation
\begin{equation}
  \label{eq:3}
  \sh\left(   \mathsf{U}_n \right) = q^{-\frac{n+1}{4}}G_n(x_0; x) ,
\end{equation}
and
we find that
the Heegaard dual Iwahori--Hecke operator
satisfy 
\begin{subequations}
  \label{eq:40}
  \begin{gather}
    \mathsf{U}_n  \, \ch(x_0)
    = q^{\frac{1}{4}} \mathsf{U}_{n-1} + q^{-\frac{1}{4}}
    \mathsf{U}_{n+1},
    \\
    \ch(x_0) \,  \mathsf{U}_n
    = q^{-\frac{1}{4}} \mathsf{U}_{n-1} + q^{\frac{1}{4}}
    \mathsf{U}_{n+1} , 
  \end{gather}
\end{subequations}
in accordance with the skein relations~\eqref{skein_1-2_1n}.
One immediately finds 
that the expression~\eqref{k_21n}  follows from
\begin{equation}
  \label{A_k_21n}
  \begin{aligned}[b]
    \mathcal{A}\left( \mathbb{k}_{2,1^n} \right)
    & = \I \, q^{-\frac{n+1}{4}}G_n(x_0; x) 
    \\
    &=\ch \left( \I \, \mathsf{U}_n \right)
      =
      \ch \left(
      \I  \, \mathsf{U}_0
      \left( \I \, q^{-\frac{1}{4}} \mathsf{T}_0\right)^{-n}
      \right) .
  \end{aligned}
\end{equation}

For $\mathbb{k}_{1,2^n}$, 
we get
\begin{equation}
  \label{k_1_2n}
  \begin{aligned}[b]
    & \mathcal{A}(\mathbb{k}_{1,2^n})
      = \ch \left( \I
      \left( \I \, q^{-\frac{1}{4}} \mathsf{U}_0 \right)^n
      \mathsf{T}_0
      \right)
    \\
    & =
      \begin{cases}
        \I^n q^{-\frac{n}{4}}
        \left\{

        q^{\frac{1}{4}}
        \widehat{\varphi}_{n-1} \, G_{-1}(x_0; x)
        +\widehat{\varphi}_{n-2} \, \ch(x_0)
        \right\}
        ,
        & \text{for $n > 0$},
        \\
        \ch(x_0),
        & \text{for $n=0$},
        \\
        \I^{-n} q^{-\frac{n}{4}}
        \left\{
        q^{-\frac{3}{4}}
        \widehat{\varphi}_{-n-1} \,
        G_1(x_0; x)
        +
        \widehat{\varphi}_{-n-2} \,
        \ch(x_0)
        \right\}
        ,
        & \text{for $n<0$,}
      \end{cases}
  \end{aligned}
\end{equation}
where
\begin{equation}
  \label{eq:45}
  \widehat{\varphi}_{n}=\varphi_{n}\left(q^{-\frac{1}{4}}
    G_0(x_0; x)\right).
\end{equation}
Here
the $n$-th polynomial $\varphi_n(x)$
is defined by
\begin{equation}
  \label{eq:12}
  \varphi_n(x)=\I^{-n} \, U_n\left( \tfrac{\I}{2}x \right),
\end{equation}
where  the Chebyshev polynomial of the second-kind is
\begin{equation}
  \label{eq:38}
  U_n\left(\tfrac{\ch(x)}{2}\right)
  =
  \frac{x^{n+1}-x^{-n-1}}{x-x^{-1}} .
\end{equation}
It should be noted that  the 3-term relation is
\begin{equation}
  \label{eq:36}
  \begin{gathered}[t]
    \varphi_n(x)= x \, \varphi_{n-1}(x) +\varphi_{n-2}(x),
    \\
    \varphi_0(x)=1,
    \qquad \varphi_1(x)=x .
  \end{gathered}
\end{equation}
The identity~\eqref{k_1_2n} is proved  as follows.
For the case of $n>0$, we have
\begin{equation*}
  \begin{aligned}[b]
    &
    \ch\left( \I^{n+1} q^{-\frac{n}{4}} \mathsf{U}_0^n \mathsf{T}_0
      \right)
      \stackrel{\eqref{T0U0T0U0X}}{=\joinrel=}
      \I^{n+1} q^{-\frac{n}{4}}
      \left\{
      \mathsf{U}_0^n \mathsf{T}_0
      -
      \left( q^{-\frac{1}{2}} \mathsf{U}_0^{-1} \mathsf{X}\right)^n
      \mathsf{T}_0^{-1}
      \right\}
    \\
    &
      \stackrel{\eqref{U0_Hecke}}{=\joinrel=}
      \I^{n+1} q^{-\frac{n}{4}}
      \Bigl\{
      \left(
      \widehat{\varphi}_{n-1}\,
      \mathsf{U}_0
      +\widehat{\varphi}_{n-2} 
      \right) \mathsf{T}_0
      -
      \left(
      \widehat{\varphi}_{n-1} \,
      q^{-\frac{1}{2}}\mathsf{U}_0^{-1} \mathsf{X}
      + \widehat{\varphi}_{n-2}
      \right) \mathsf{T}_0^{-1}
      \Bigr\}    \\
    &
      =
      \I^{n+1}q^{-\frac{n}{4}}
      \left\{
      q^{\frac{1}{4}}
      \widehat{\varphi}_{n-1} \,
        \ch\left( q^{-\frac{1}{4}} \mathsf{U}_0 \mathsf{T}_0 \right)
      +
      \widehat{\varphi}_{n-2} \,
        \sh\left( \mathsf{T}_0\right)
        \right\},
  \end{aligned} 
\end{equation*}
which reduces to~\eqref{k_1_2n} from~\eqref{T0_Hecke}
and~\eqref{A_k_21n}.
The case for $n<0$ follows from the same manner.

For $\mathbb{k}_{3,2^n}$ with  $n>0$, we have 
\begin{equation*}
  \begin{aligned}[b]
    \mathcal{A} \left( \mathbb{k}_{3,2^n} \right)\mathsf{e}
    & =
      \ch \left( \mathsf{T}_1 \left(
      \I \, q^{-\frac{1}{4}} \mathsf{U}_0 \right)^n \mathsf{T}_0
      \right)\mathsf{e}
    \\
    &
      \stackrel{\eqref{T0U0T0U0X}}{=\joinrel=}
      \I^n   q^{-\frac{n}{4}}
      \left(
      \mathsf{T}_1 \mathsf{U}_0^n \mathsf{T}_0
      +
      \left(q^{-\frac{1}{2}} \mathsf{U}_0^{-1} \mathsf{X} \right)^n
      \mathsf{T}_0^{-1} \mathsf{T}_1^{-1}
      \right) \mathsf{e}
    \\
    &
      \stackrel{\eqref{U0_Hecke}}{=\joinrel=}
        \I^n   q^{-\frac{n}{4}}
        \Bigl\{
        \widehat{\varphi}_{n-1}
        \left(
          \mathsf{T}_1 \mathsf{U}_0 \mathsf{T}_0
          + q^{-\frac{1}{2}} \mathsf{U}_0^{-1} \mathsf{X} \mathsf{T}_0^{-1} \mathsf{T}_1^{-1}
        \right)
        +
        \widehat{\varphi}_{n-2}
        \left(
          \mathsf{T}_1 \mathsf{T}_0+
          \mathsf{T}_0^{-1}\mathsf{T}_1^{-1}
        \right)
        \Bigr\}\mathsf{e} ,
  \end{aligned}
\end{equation*}
where we have used
\begin{equation}
  \label{eq:42}
  G_n(x_0; x) \mathsf{T}_1= \mathsf{T}_1\, G_n(x_0; x)
\end{equation}
The last expression can be written in terms of
$\mathcal{A}(\mathbb{k}_3)$~\eqref{A_k3} and
\begin{multline}
  \label{eq:41}
  \mathcal{A}(\mathbb{k}_{3,2})\mathsf{e}
  =
  \I  \sum_{\epsilon=\pm}
  W(x^\epsilon;x_0,b_1,b_2) \,
  \frac{ x_0  }{q^{\frac{1}{2}}x^\epsilon+x_0^2} \,
  K_{-1}(x_0; x^\epsilon) \, \eth^\epsilon
  \\
  +\I \,
  \frac{(b_1+b_2) (1+b_1 b_2) q^{\frac{1}{2}}x}{
    b_1 b_2 \left( q^{\frac{1}{2}}-x \right)
    \left( 1-q^{\frac{1}{2}}x \right)} \,
  G_{-1}(x_0; x)
  .
\end{multline}
As a result, we obtain
\begin{equation}
  \label{eq:39}
  \begin{aligned}[b]
    &
      \mathcal{A} \left( \mathbb{k}_{3,2^n} \right)\mathsf{e}
    \\
    & =
      \begin{cases}
        \left(
        \I^{n-1}  q^{-\frac{n-1}{4}}
        \widehat{\varphi}_{n-1}\,
        \mathcal{A}\left(\mathbb{k}_{3,2}\right)
        +
        \I^n  q^{-\frac{n}{4}}
        \widehat{\varphi}_{n-2} \,
        \mathcal{A}\left(\mathbb{k}_{3}\right)
        \right)\mathsf{e}
        ,
        & \text{for $n>0$,}
        \\
        \mathcal{A}(\mathbb{k}_3) \, \mathsf{e},
        & \text{for $n=0$},
        \\
        \left(
        \I^{-n-1} q^{-\frac{n+1}{4}}
        \widehat{\varphi}_{-n-1} \,
        \mathcal{A}\left(\mathbb{k}_{2,3}\right)
        +
        \I^{-n}  q^{-\frac{n}{4}}
        \widehat{\varphi}_{-n-2} \,
        \mathcal{A}\left(\mathbb{k}_{3}\right)
        \right)
        \mathsf{e}
        ,
        & \text{for $n<0$,}
      \end{cases}
  \end{aligned}
\end{equation}
where $\mathcal{A}(\mathbb{k}_{2,3})$ is given in~\eqref{A_k23}.

\begin{figure}[htbp]
  \centering
  \includegraphics[scale=.7]{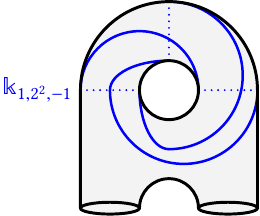}
  \qquad
  \includegraphics[scale=.7]{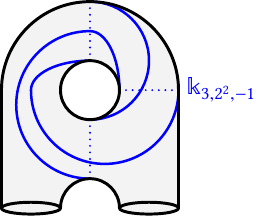}
  \caption{Variants of the trefoil $T_{2,3}$ on $\Sigma_{1,2}$.}
  \label{fig:torus_23}
\end{figure}

We shall  pay attentions to variants of the torus knots $T_{2,2n+1}$ in
Fig.~\ref{fig:torus_23}, whose $N$-colored Jones polynomial is a typical example of the quantum
modular form~\cite{KHikami02c}.
For
$\mathbb{k}_{1,2^2,1^{-n}}$
(see Fig.~\ref{fig:torus_23} for a case of $n=1$),
we obtain by use of~\eqref{define_U_n}
\begin{align}
  &  \mathcal{A}\left(\mathbb{k}_{1,2^2,1^{-n}}\right)
    =
    \ch \left( (-q^{\frac{1}{2}})^{-n} \mathsf{U}_0 \mathsf{T}_0^n
    \mathsf{U}_0 \mathsf{T}_0^{n+1}
    \right)
    =
    \ch\left(-
    q^{-\frac{1}{4}} \mathsf{U}_{-n}\mathsf{U}_{-n-1}
    \right)
    \nonumber
  \\
  &
    \begin{multlined}[b][.8\textwidth]
      =
      \frac{q^{\frac{n}{2}} x}{\left( q^{\frac{1}{2}}-x \right)^2}
      \Bigl\{
      K_{-n}(x_0;  x^{-1}) \,
      K_{-n-1}\left( x_0; q^{-1} x  \right)
      +q^{\frac{1}{2}} K_{-n-1}(x_0 ; x^{-1}) \,
      K_{-n}\left(x_0;  q^{-1} x\right)
      \\
      -
      q^{-\frac{1}{2}}x \, G_{-n}(x_0; x) \, G_{-n-1}(x_0; x)
      -q \, x^{-1} G_{-n-1}(x_0; x) \, G_{-n}(x_0; x)
      \Bigr\} .
    \end{multlined}
    \label{A_k_1221-n}
\end{align}
For $\mathbb{k}_{3,2^2,1^{-n}}$
(see Fig.~\ref{fig:torus_23} for a case of $n=1$), we
have
\begin{align}
  &
    \mathcal{A} \left( \mathbb{k}_{3,2^2,1^{-n}} \right) \mathsf{e}
    =
    \ch\left(
    (-1)^{n+1}q^{-\frac{n+1}{2}}
    \mathsf{U}_0 \mathsf{T}_0^n \mathsf{U}_0 \mathsf{T}_0^{n+1} \mathsf{T}_1
    \right)
    =
    \ch \left(
    \I \, q^{-\frac{1}{4}} \mathsf{U}_{-n} \mathsf{U}_{-n-1} \mathsf{T}_1
    \right) \mathsf{e}
    \nonumber
  \\
  &
    \begin{multlined}[b][.7\textwidth]
      =
      \Biggl\{
      \frac{
        \left( b_1 b_2+q^{\frac{1}{2}} x \right)
        \left( b_1+q^{\frac{1}{2}} b_2 x\right)
      }{b_1 b_2 (1-x^2)} \,
      \frac{q^{\frac{n}{2}} x}{
        \left(1-q^{\frac{1}{2}}x \right)^2}
      \\
      \times
      \left\{
        q^{-\frac{1}{2}} x^{-1} K_{-n-1}(x_0; x) \, G_{-n} (x_0; q\, x)
        -G_{-n-1}(x_0; x) \, K_{-n}(x_0; x)
      \right\} \eth 
      \\
      -
      \frac{
        \left( b_1 b_2x+q^{\frac{1}{2}}  \right)
        \left( b_1x+q^{\frac{1}{2}} b_2 \right)
      }{b_1 b_2 (1-x^2)}
      \frac{q^{\frac{n}{2}} x}{
        \left(x-q^{\frac{1}{2}} \right)^2}
      \\
      \times
      \left\{
        q^{-\frac{1}{2}} x\, K_{-n-1}(x_0; x^{-1}) \,
        G_{-n} (x_0;  q \, x^{-1})
        -G_{-n-1}(x_0; x) \, K_{-n}(x_0; x^{-1})
      \right\} \eth^{-1}
      \\
      +
      \frac{(b_1+b_2) (1+b_1 b_2)}{b_1 b_2}
      \frac{q^{\frac{n+1}{2}}x^2}{
        \left( q^{\frac{1}{2}}- x \right)^3
        \left(1-q^{\frac{1}{2}} x\right)}
      \\
      \times
      \Bigl\{
      K_{-n}(x_0; x^{-1})  \, K_{-n-1}(x_0; q^{-1}x)
      +q^{\frac{1}{2}} K_{-n-1}(x_0;  x^{-1}) \, K_{-n}(x_0; q^{-1} x)
      \\
      - q^{-\frac{1}{2}}x \, G_{-n}(x_0; x) \, G_{-n-1}(x_0; x)
      - q \,x^{-1}
      G_{-n-1}(x_0; x) \, G_{-n}(x_0; x)
      \Bigr\}
      \Biggr\} \, \mathsf{e}.
    \end{multlined}
    \label{A_k_3221-n}
\end{align}
One sees that $O(\eth^0)$ in~\eqref{A_k_3221-n} coincides
with~\eqref{A_k_1221-n} up to prefactor.


\section{Classical Skein Algebra and Cluster Algebra}
\label{sec:cluster}

The classical  skein algebra
$\Sk^{cl}(\Sigma)$ generated by simple closed curves on~$\Sigma$
has
the Goldman
Poisson structure
defined
for the Fenchel--Nielsen coordinate system
of the Teichm{\"u}ller space~\cite{VGTura91b}.
In this section we employ the cluster algebra~\cite{FominZelev02a} to study the Poisson structure
of the simple
closed curves (see \emph{e.g.}~\cite{FomiShapThur08a,FominThurs12a}).
The results of this section follow simply from the trace
map~\cite{BonaWong11a}, but we hope
that it is
useful for further studies on the generalized DAHA and the skein
algebra on surface.


The classical skein algebra
$\Sk^{cl}(\Sigma_{1,2})$
is  given
from~\eqref{skein_Przytycki_1}--\eqref{skein_coupled_Markov} as a
limit $A\to 1$, and the Poisson structure is endowed from
non-commutativity of the skein algebra~\eqref{skein_relation}.
It is generated by a set of curves
$\{
\mathbb{k}_1, \mathbb{k}_2, \mathbb{k}_3,
\mathbb{k}_{1,2}, \mathbb{k}_{2,3}, \mathbb{k}_{1,2,3}
\}$
satisfying
\begin{gather}
  \label{classical_skein_1}
  \Rel^{cl}(\mathbb{k}_1, \mathbb{k}_2, \mathbb{k}_{1,2}),
  \quad
  \Rel^{cl}(\mathbb{k}_{1,2}, \mathbb{k}_3, \mathbb{k}_{1,2,3}) 
  \quad
  \Rel^{cl}(\mathbb{k}_2, \mathbb{k}_3, \mathbb{k}_{2,3}),
  \quad
  \Rel^{cl}(\mathbb{k}_1, \mathbb{k}_{2,3}, \mathbb{k}_{1,2,3}), 
  \\
  \{ \mathbb{k}_1 ~,~ \mathbb{k}_3\} = 0,
  \qquad
  \{ \mathbb{k}_2 ~,~ \mathbb{k}_{1,2,3}\} = 0,
  \qquad
  \{ \mathbb{k}_{1,2} ~,~ \mathbb{k}_{2,3}\}
  =\mathbb{k}_1 \, \mathbb{k}_3 - \mathbb{k}_2 \, \mathbb{k}_{1,2,3} ,
  \\
  \label{coupled_Markov_0}
  \mathbb{k}_{1,2} \, \mathbb{k}_{2,3} =
  \mathbb{k}_1 \, \mathbb{k}_3
  + \mathbb{k}_2 \, \mathbb{k}_{1,2,3}
  + \mathbb{b}_1 +\mathbb{b}_2 ,
  \\
  \label{coupled_Markov}
  \begin{multlined}[b][.8\textwidth]
    \mathbb{k}_1 \, \mathbb{k}_2 \, \mathbb{k}_3 \, \mathbb{k}_{1,2,3}
    + \mathbb{k}_1^2
    + \mathbb{k}_2^2 
    +   \mathbb{k}_3^2
    + \mathbb{k}_{1,2}^2 + \mathbb{k}_{2,3}^2+  \mathbb{k}_{1,2,3}^2
    \\
    =
    \left(
      \mathbb{k}_1 \, \mathbb{k}_2   +  \mathbb{k}_3 \,
      \mathbb{k}_{1,2,3}
    \right) \,  \mathbb{k}_{1,2}
     +
     \left(
       \mathbb{k}_2 \,  \mathbb{k}_3  + \mathbb{k}_1 \,
       \mathbb{k}_{1,2,3}
     \right) \, \mathbb{k}_{2,3}
    + \mathbb{b}_1  \, \mathbb{b}_2 +4 .
  \end{multlined}
\end{gather}
Here $\Rel^{cl}(\mathbb{x}_0, \mathbb{x}_1, \mathbb{x}_2)$ denotes the
Poisson relations
\begin{equation}
  \label{eq:26}
  \{ \mathbb{x}_i ~,~ \mathbb{x}_{i+1} \}
  =
  \frac{1}{2} \mathbb{x}_i \, \mathbb{x}_{i+1} - \mathbb{x}_{i+2} ,
\end{equation}
where $i=0,1,2$ and the subscripts are read modulo $3$.
See that both the boundary curves,  $\mathbb{b}_1$ and
$\mathbb{b}_2$,
are center,
\emph{i.e.},
they Poisson-commute with other simple closed curves.
We note~\cite{Bonda04a,Ugagl99a,Boalc01a} that the same symplectic
structure was studied for the upper triangular matrix
(see also~\cite{ChekhShapi22a}).
It should be noted
that~\eqref{coupled_Markov_0}--\eqref{coupled_Markov}
can be regarded  as
the coupled Markov equations;
they reduce to
the
Markov equation~\cite{Aigner13Book}
\begin{equation}
  \label{eq:28}
  \mathbb{x}_1^2+\mathbb{y}_1^2+\mathbb{z}_1^2
  =\mathbb{x}_1 \mathbb{y}_1 \mathbb{z}_1
  + 2- \mathbb{b}_1 ,
\end{equation}
when we remove the second puncture $\mathbb{b}_2$ by setting
\begin{equation}
  \mathbb{x}_1
  = \mathbb{k}_2=\mathbb{k}_{1,2,3} ,
  \qquad
  \mathbb{y}_1
  =\mathbb{k}_1=\mathbb{k}_3 ,
  \qquad
  \mathbb{z}_1
  =\mathbb{k}_{2,3}=\mathbb{k}_1 \mathbb{k}_{1,2,3}-\mathbb{k}_{1,2},
  \qquad
  \mathbb{b}_2=-2 .
\end{equation}
See~\cite{Nakanis21a} for the Thurston coordinates of $\Sigma_{1,2}$,
and   see~\cite{NakanNaata95a} as well for the Diophantine equation
generalizing the Markov equation
associated to $\Sigma_{1,2}$.


\begin{figure}[tbhp]
  \centering
  \includegraphics[scale=.7]{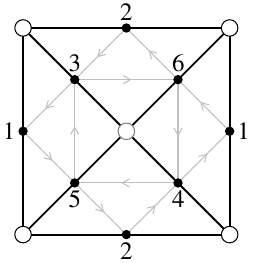}
  \qquad
  \includegraphics[scale=.7]{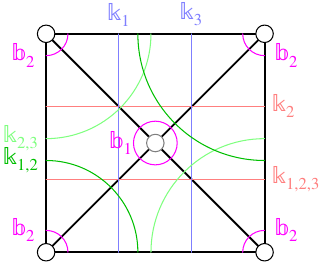}
  \caption{The triangulation and the cluster
    structure associated with $\Sigma_{1,2}$.
    In both figures,
    the square denotes the torus as usual by identifying the north
    (resp. west)
    edge
    with the south
    (resp. east) edge.
    The circles~$\circ$ on corners denote the same puncture,
    while the 
    circle~$\circ$ in the center is another puncture.
    In the left figure,
    the black circles~$\bullet$ denote vertices for the cluster
    variables, which are
    put on each edge of
    ideal triangles.
    The simple closed curves in Fig.~\ref{fig:curves}
    are depicted in the right.}
  \label{fig:triangle_Sigma12}
\end{figure}

The algebra~\eqref{classical_skein_1}--\eqref{coupled_Markov} can be
constructed by the trace map on the cluster variables assigned to each
vertex~$\bullet$ in Fig.~\ref{fig:triangle_Sigma12}.
We introduce the ideal triangulation of $\Sigma_{1,2}$  as in
Fig.~\ref{fig:triangle_Sigma12}.
For each triangle whose vertices are punctures, we  assign  a
dual counter-clockwise oriented triangle to obtain the quiver for $\Sigma_{1,2}$.
We label each vertex in the dual oriented triangles as in Fig.~\ref{fig:triangle_Sigma12},
and we set
\begin{equation}
  \label{eq:1}
  b_{jk}=
  \#(\text{arrows from $\overset{j}{\bullet}$ to $\overset{k}{\bullet}$})
  -
  \#(\text{arrows from $\overset{k}{\bullet}$ to $\overset{j}{\bullet}$}) ,
\end{equation}
which gives 
the  skew-symmetric exchange matrix~$\mathbf{B}$,
\begin{equation}
  \label{B_matrix}
  \mathbf{B}  =
  ( b_{jk})=
  \begin{pmatrix}
    0 & 0 & -1 & -1 & 1 & 1\\
    0 & 0 & 1 & 1& -1 & -1 \\
    1 & -1 & 0 & 0 & -1 & 1 \\
    1 & -1 & 0 & 0 & 1 & -1 \\
    -1 & 1 & 1 & -1 & 0 & 0 \\
    -1 & 1 & -1 & 1 & 0 & 0
  \end{pmatrix} .
\end{equation}
With this exchange matrix~$\mathbf{B}$,
the log-canonical coordinates $y_i$ for $1\leq i \leq 6$ assigned to the
vertex~$\overset{i}{\bullet}$ in Fig.~\ref{fig:triangle_Sigma12}
have the Poisson algebra
\begin{equation}
  \label{eq:23}
  \{ y_j , y_k \} = b_{jk} y_j \, y_k, 
\end{equation}
Then
following the trace map~\cite{BonaWong11a}
(see~\cite{KHikami17a} for $\Sigma_{1,1}$ and $\Sigma_{0,4}$),
we can put
\begin{subequations}
  \label{k_from_cluster}
\begin{align}
  \label{eq:24}
  \mathbb{k}_1
  & \mapsto
    \sqrt{y_2 y_3 y_5} + \sqrt{\frac{y_2 y_5}{y_3}}
    +\sqrt{\frac{y_5}{y_2 y_3}}
    +\frac{1}{\sqrt{y_2 y_3 y_5}},
\\    
  \mathbb{k}_2
  & \mapsto
    \sqrt{y_1 y_3 y_6} + \sqrt{\frac{y_1 y_3}{y_6}}
    +\sqrt{\frac{y_3}{y_1 y_6}}
    +\frac{1}{\sqrt{y_1 y_3 y_6}},
  \\
  \mathbb{k}_3
  & \mapsto
    \sqrt{y_2 y_4 y_6} + \sqrt{\frac{y_2 y_6}{y_4}}
    +\sqrt{\frac{y_6}{y_2 y_4}}
    +\frac{1}{\sqrt{y_2 y_4 y_6}},
  \\
  \mathbb{k}_{1,2,3}
  & \mapsto
    \sqrt{y_1 y_4 y_5} + \sqrt{\frac{y_1 y_4}{y_5}}
    +\sqrt{\frac{y_4}{y_1 y_5}}
    +\frac{1}{\sqrt{y_1 y_4 y_5}},
  \\
  \mathbb{k}_{1,2}
  & \mapsto
    \sqrt{y_1 y_2 y_5 y_6}
    +\sqrt{\frac{y_1 y_5 y_6}{y_2}}
    +\sqrt{\frac{y_1 y_5}{y_2 y_6}}+\sqrt{\frac{y_1 y_6}{y_2 y_5}}
    +\sqrt{\frac{y_1}{y_2 y_5 y_6}}+\frac{1}{\sqrt{y_1 y_2 y_5 y_6}},
  \\
  \mathbb{k}_{2,3}
  & \mapsto
    \sqrt{y_1 y_2 y_3 y_4}
    +\sqrt{\frac{y_2 y_3 y_4}{y_1}}
    +\sqrt{\frac{y_2 y_3}{y_1 y_4}}+\sqrt{\frac{y_2 y_4}{y_1 y_3}}
    +\sqrt{\frac{y_2}{y_1 y_3 y_4}}+\frac{1}{\sqrt{y_1 y_2 y_3 y_4}},
  \\
  \mathbb{b}_1
  & \mapsto
    \sqrt{y_3 y_4 y_5y_6}+\frac{1}{\sqrt{y_3 y_4 y_5 y_6}},
  \\
  \mathbb{b}_2
  & \mapsto
    y_1 y_2 \sqrt{y_3 y_4 y_5 y_6}
    +\frac{1}{y_1 y_2 \sqrt{y_3 y_4 y_5 y_6}} .
\end{align}
\end{subequations}

The classical limit of the  Dehn twists in~\eqref{Mod_12} can be read as
\begin{subequations}
  \label{classical_Dehn}
  \begin{align}
    \mathscr{D}_1^{cl}:
    &
      \begin{pmatrix}
        \mathbb{k}_1 \\ \mathbb{k}_2 \\ \mathbb{k}_3 \\
        \mathbb{k}_{1,2} \\ \mathbb{k}_{2,3} \\ \mathbb{k}_{1,2,3}
      \end{pmatrix}
      \mapsto
      \begin{pmatrix}
        \mathbb{k}_1 \\ \mathbb{k}_1\mathbb{k}_2- \mathbb{k}_{1,2} \\ \mathbb{k}_3 \\
        \mathbb{k}_{2} \\ \mathbb{k}_1\mathbb{k}_{2,3}-\mathbb{k}_{1,2,3} \\ \mathbb{k}_{2,3}
      \end{pmatrix}
      ,      
    \\
    \mathscr{D}_2^{cl}:
    &
      \begin{pmatrix}
        \mathbb{k}_1 \\ \mathbb{k}_2 \\ \mathbb{k}_3 \\
        \mathbb{k}_{1,2} \\ \mathbb{k}_{2,3} \\ \mathbb{k}_{1,2,3}
      \end{pmatrix}
      \mapsto
      \begin{pmatrix}
        \mathbb{k}_{1,2} \\ \mathbb{k}_2 \\
        \mathbb{k}_2 \mathbb{k}_3 -    \mathbb{k}_{2,3}\\
        \mathbb{k}_2 \mathbb{k}_{1,2}- \mathbb{k}_1 \\
        \mathbb{k}_{3} \\ \mathbb{k}_{1,2,3}
      \end{pmatrix}
      ,      
    \\
    \mathscr{D}_3^{cl}:
    &
      \begin{pmatrix}
        \mathbb{k}_1 \\ \mathbb{k}_2 \\ \mathbb{k}_3 \\
        \mathbb{k}_{1,2} \\ \mathbb{k}_{2,3} \\ \mathbb{k}_{1,2,3}
      \end{pmatrix}
      \mapsto
      \begin{pmatrix}
        \mathbb{k}_1 \\ \mathbb{k}_{2,3} \\ \mathbb{k}_3 \\
        \mathbb{k}_{1,2,3} \\
        \mathbb{k}_3\mathbb{k}_{2,3}-\mathbb{k}_2 \\
        \mathbb{k}_3\mathbb{k}_{1,2,3} - \mathbb{k}_{1,2}
      \end{pmatrix} ,      
  \end{align}
\end{subequations}
where 
the boundary curves
$\mathbb{b}_1$ and $\mathbb{b}_2$ stay invariant
under all maps~$\mathscr{D}_a^{cl}$.
We can check that they satisfy the relations in~\eqref{Mod_12} as
\begin{equation}
  \label{mapping_class_group_C}
  \mathscr{D}_{1,3}^{cl}=
  \mathscr{D}_{3,1}^{cl},
  \qquad
  \mathscr{D}_{1,2,1}^{cl}
  =
  \mathscr{D}_{2,1,2}^{cl},
  \qquad
  \mathscr{D}_{2,3,2}^{cl}
  =
  \mathscr{D}_{3,2,3}^{cl},
  \qquad
  \left(\mathscr{D}_{1,2,3}^{cl}\right)^4
  =1 ,
\end{equation}
where
$\mathscr{D}_{i,j,\dots,k}^{cl}
=\mathscr{D}_i^{cl} \mathscr{D}_j^{cl}
\dots
\mathscr{D}_k^{cl} $ as before.
In view of the trace map~\eqref{k_from_cluster},
the automorphisms~\eqref{classical_Dehn} can be realized in terms of
the cluster mutation.
The mutation $\mu_k$ at the vertex $k$
is an involutive action  defined by
\begin{equation}
  \label{eq:32}
  \mu_k(\mathbf{y}, \mathbf{B})=
  \left( \widetilde{\mathbf{y}}, \widetilde{\mathbf{B}}
  \right) ,
\end{equation}
where
$\mathbf{y}=(y_k)$,
$\widetilde{\mathbf{y}}
=
\left( \widetilde{y}_k \right)$,
$\widetilde{\mathbf{B}}=
\left(
  \widetilde{b}_{ij} \right)$,
and
\begin{align}
  \widetilde{y}_i
  & =
    \begin{cases}
      y_k^{-1} , & \text{for $i=k$} ,
      \\
      y_i \left( 1+y_k^{-1} \right)^{-b_{ki}},
                 & \text{for $i\neq k$, $b_{ki}\geq 0$},
      \\
      y_i \left( 1+y_k \right)^{-b_{ki}} ,
                 &\text{for $i\neq k$, $b_{ki}\leq 0$} ,
    \end{cases}
  \\
  \widetilde{b}_{ij}
  & =
    \begin{cases}
      - b_{ij}, & \text{for $i=k$ or $j=k$},
      \\
      b_{ij}+\frac{1}{2}\left(|b_{ik}| b_{kj}+b_{ik}|b_{kj}|\right),
                & \text{otherwise}.
    \end{cases}
\end{align}
We then find that the Dehn twists~\eqref{classical_Dehn} can be induced from the
following mutations on~\eqref{k_from_cluster};
\begin{subequations}
  \begin{align}
    \label{eq:31}
    \mathscr{D}_1^{cl}=
    \sigma_{3,5} \, \mu_{2,3,2}:
    &
      \begin{pmatrix}
        y_1 \\ y_2 \\ y_3 \\ y_4 \\ y_5 \\ y_6
      \end{pmatrix}
      \mapsto
      \begin{pmatrix}
        \frac{y_1 y_2 y_3}{1 + y_2 + y_2 y_3}\\
        \frac{1 + y_2 + y_2 y_3}{y_3}\\
        \frac{1}{
        y_2(1 +  y_3)}
        \\
        \frac{y_2 (1 + y_3) y_4}{1 + y_2 +  y_2 y_3}
        \\
        (1 + y_3) (1 + y_2 + y_2 y_3) y_5
        \\
        \frac{y_3 y_6}{1 + y_3}
      \end{pmatrix}
      ,
    \\
    \mathscr{D}_2^{cl}=
    \sigma_{3,6} \, \mu_{1,6,1}:
    &
      \begin{pmatrix}
        y_1 \\ y_2 \\ y_3 \\ y_4 \\ y_5 \\ y_6
      \end{pmatrix}
      \mapsto
      \begin{pmatrix}
        \frac{1 + y_1 + y_1 y_6}{y_6}
        \\
        \frac{y_1 y_2 y_6}{1 + y_1 + y_1 y_6}
        \\
        \frac{1}{y_1(1+ y_6)}
        \\
        \frac{y_4 y_6}{1 + y_6}
        \\
        \frac{y_1 y_5 (1 + y_6)}{1 + y_1 +  y_1 y_6}
        \\
        y_3 (1 + y_6) (1 + y_1 + y_1 y_6)
      \end{pmatrix}
      ,
    \\ 
    \mathscr{D}_3^{cl}=
    \sigma_{4,6} \, \mu_{2,4,2}:
    &
      \begin{pmatrix}
        y_1 \\ y_2 \\ y_3 \\ y_4 \\ y_5 \\ y_6
      \end{pmatrix}
      \mapsto
      \begin{pmatrix}
        \frac{y_1 y_2 y_4}{1 + y_2 + y_2 y_4}
        \\
        \frac{1 + y_2 +  y_2 y_4}{y_4}
        \\
        \frac{y_2 y_3 (1 + y_4)}{1 + y_2 +  y_2 y_4}
        \\
        (1 + y_4) (1 + y_2 + y_2 y_4) y_6
        \\
        \frac{y_4 y_5}{1 + y_4}
        \\
        \frac{1}{y_2(1 +   y_4)}
      \end{pmatrix}
      ,
  \end{align}
\end{subequations}
where $\sigma_{i,j}$ means a permutation
relabeling of the vertices $i$ and $j$, and
$\mu_{i,j,\dots,k}=\mu_i \mu_j\dots \mu_k$.
See that
the exchange matrix $\mathbf{B}$~\eqref{B_matrix} is invariant under
the mutations $\mathscr{D}_a^{cl}$.
We note that
the last identity in~\eqref{mapping_class_group_C} gives the mutation
loop of length~$30$,
\begin{equation}
  \mu_{
    2,5,2,1,5,1,2,5,3,2,1,3,1,2,3,
    6,2,1,6,1,2,6,4,2,1,4,1,2,4,2}
  =1 .
\end{equation}
See~\cite{KimYam20a} where the mutation loop of length~$32$ is given.


\section{Concluding Remarks}
We have constructed the generalized DAHA for the skein algebra $\Sk_A(\Sigma_{1,2})$.
It remains for future to establish
a relationship between the DAHA
operators for closed curve~$\mathbb{k}$
(or, the DAHA polynomial
$\mathcal{A}(\mathbb{k})(1)$ and its $N$-colored generalization)
and the quantum group $U_q(sl_2)$ knot invariants.
As the Riemann surface $\Sigma_{g,n}$ can be naturally
decomposed into
$\Sigma_{1,1}$ and $\Sigma_{1,2}$,
we expect that
our results on the twice-punctured surface will be useful for the generalized DAHA for
the skein algebra on
$\Sigma_{g,n}$.
We  hope that our DAHA realization for $\Sk_A(\Sigma_{1,2})$ may be useful in
studies of
$(1,1)$-knots
(see, \emph{e.g.},~\cite{CattaMulaz04a}).

\section*{Acknowledgments}
The work of KH is supported in part by
JSPS KAKENHI Grant Numbers
JP23K22388,
JP22H01117,
JP20K03931.


\end{document}